\definecolor{rr}{rgb}{.8,0,.3}
\newfont{\bb}{msbm10 at 11pt}
\newfont{\bbsmall}{msbm8 at 8pt}
\def\R{\mathbb{R}}
\def\N{\mathbb{N}}
\newcommand{\ben}{\begin{enumerate}}
\newcommand{\bit}{\begin{itemize}}
\newcommand{\een}{\end{enumerate}}
\newcommand{\eit}{\end{itemize}}
\newcommand{\ed}{\end{document}}
\def\cA{\mathcal{A}}
\def\cU{\mathcal{U}}
\def\cS{\mathcal{S}}
\def\cR{\mathcal{R}}
\def\cB{\mathcal{B}}
\def\cW{\mathcal{W}}
\def\cL{\mathcal{L}}
\def\cM{\mathcal{M}}
\def\cG{\mathcal{G}}
\let\hat=\widehat
\let\landa=\lambda
\let\alfa=\alpha
\let\parc=\partial
\def\ep{\varepsilon}
\def\landa{\lambda}
\def\flecha{\rightarrow}
\def\esiz{\langle}
\def\esde{\rangle}
\def\cte.{\mathop{\rm cte.}\nolimits}
\def\N{\mathbb{N}}
\def\R{\mathbb{R}}
\def\H{\mathbb{H}}
\def\S{\mathbb{S}}
\def\X{\mathfrak{X}}
\newtheorem{theorem}{Theorem}[section]
\newtheorem{remark}[theorem]{Remark}
\newtheorem{corollary}[theorem]{Corollary}
\newtheorem{definition}[theorem]{Definition}
\newtheorem{example}[theorem]{Example}
\numberwithin{equation}{section}
\begin{document}

\begin{title}
[Uniqueness of immersed spheres]{Uniqueness of immersed spheres in three-manifolds}
\end{title}
\today
\author{José A. Gálvez}
\address{José A. Gálvez, Departamento de Geometría y Topología,
Universidad de Granada, 18071 Granada, Spain}
 \email{jagalvez@ugr.es}

\author{Pablo Mira}
\address{Pablo Mira, Departamento de Matemática Aplicada y Estadística, Universidad Politécnica de
Cartagena, 30203 Cartagena, Murcia, Spain.}

\email{pablo.mira@upct.es}

\thanks{The authors were partially supported by
MICINN-FEDER, Grant No. MTM2013-43970-P, Junta de Andalucía Grant No.
FQM325, 
and Programa de Apoyo
a la Investigacion, Fundacion Seneca-Agencia de Ciencia y
Tecnologia Region de Murcia, reference 19461/PI/14.}

\subjclass{Primary 53A10; Secondary 49Q05, 53C42}


\keywords{Immersed spheres, constant mean curvature, Hopf uniqueness problem, prescribed curvature, Weingarten surfaces, homogeneous three-manifolds.}


\begin{abstract}
Let $\cA$ be a class of immersed surfaces in a three-manifold $M$, and assume that $\cA$ is modeled by an elliptic PDE over each tangent plane. In this paper we solve the so-called Hopf uniqueness problem for the class $\cA$ under the only mild assumption of the existence of a  \emph{transitive} family of candidate surfaces $\cS\subset \cA$. Specifically, we prove that \emph{any compact immersed surface of genus zero in the class $\cA$ is a candidate sphere}. This theorem unifies and extends many previous uniqueness results of different contexts. As an application, we settle in the affirmative a 1956 conjecture by A.D. Alexandrov on the uniqueness of immersed spheres with prescribed curvatures in $\R^3$.
\end{abstract}

\maketitle

\section{Introduction}
Two deeply influential results model the geometry of compact constant mean curvature (CMC) surfaces in Euclidean three-space $\R^3$. 

First, Alexandrov's theorem \cite{A1} states that \emph{any compact embedded CMC surface in $\R^3$ is a round sphere}. The proof uses the maximum principle for elliptic PDEs, the existence of ambient reflections and a \emph{sweeping procedure}. The proof works in great generality for other classes of hypersurfaces in Riemannian manifolds, and also for elliptic PDEs, where it is known as the \emph{moving planes method} and has become one of the most important tools for proving uniqueness results (see \cite{GNN} and subsequent works). 

And second, Hopf's theorem \cite{Ho0} states that \emph{any immersed compact CMC surface of genus zero in $\R^3$ is a round sphere}. The proof uses holomorphicity of the so-called Hopf differential, or alternatively, the Poincaré-Hopf index theorem applied to principal line fields. The arguments are specific of dimension two. Hopf's theorem has been generalized to a large number of families of surfaces and geometric situations, but these generalizations usually need of many \emph{ad hoc} computations and considerations, depending on the chosen context. 

In this paper we prove an extremely general version of Hopf's theorem. In it, we substitute $\R^3$ by an arbitrary orientable three-manifold $M$, the class of CMC surfaces by an arbitrary class of immersed oriented surfaces $\cA$ in $M$ modeled by a (possibly fully nonlinear) elliptic PDE on each tangent plane, and the role of round spheres by the existence of a family of \emph{candidate examples} for which uniqueness is aimed. In these very general conditions, we prove that any surface of $\cA$ diffeomorphic to $\S^2$ is a candidate example (if no candidate example is diffeomorphic to $\S^2$, we obtain a non-existence result). This theorem somehow mimics the generality of the moving planes method by Alexandrov, but for the context of compact surfaces of genus zero with arbitrary self-intersections in three-manifolds.

When the ambient space $M$ is $\R^3$, or more generally a simply connected Riemannian homogeneous three-manifold, our general uniqueness theorem unifies and generalizes a large amount of the previously known theorems on uniqueness of compact surfaces of genus zero satisfying some elliptic geometric condition; see Section \ref{geoap} for the details.

There is an application of special relevance of our uniqueness study: an affirmative solution to a long-standing conjecture by A.D. Alexandrov \cite{A1} on the uniqueness of immersed spheres in $\R^3$ that satisfy a prescribed elliptic relation between their principal curvatures and Gauss map. See Section 5 for a specific statement of the conjecture (Theorem \ref{alexcon}) and its proof, as well as for an explanation of the historical context of this conjecture and its importance. As a byproduct, we solve another classical problem studied among others by Hopf, Chern, Alexandrov, Pogorelov or Hartman and Wintner: the classification of elliptic Weingarten spheres in $\R^3$. Specifically, we prove (Corollary \ref{weing}) that round spheres are the only immersed spheres in $\R^3$ whose principal curvatures $\kappa_1\geq\kappa_2$ satisfy a Weingarten relation $W(\kappa_1,\kappa_2)=0$, with $W\in C^1$ verifying the ellipticity condition $W_{\kappa_1}W_{\kappa_2}>0$ at umbilical points.

We state our main general result in Section 2, and prove it in Section 3. In Section 4 we explore some applications. Sections 5 is devoted to proving the Alexandrov conjecture in $\R^3$. 

\section{The general uniqueness theorem}

\subsection{Statement of the result}

Let $M$ be a smooth orientable three-manifold, and $\cA$ be a class of immersed oriented surfaces in $M$. By a \emph{sphere} or \emph{immersed sphere} in $M$ we will mean a compact, possibly self-intersecting, surface of genus zero immersed in $M$. We assume all surfaces to be smooth, i.e. of class $C^{\8}$.

Consider for every immersed oriented surface $\Sigma$ in $M$ its associated Legendrian lift $\cL_{\Sigma}: \Sigma \flecha G^2(M)$ into the Grassmannian of oriented $2$-planes in $TM$, which assigns to each $q\in \Sigma$ the value $\cL_{\Sigma} (q)= (q, T_q\Sigma)\in G^2(M)$.

\begin{definition}\label{defitransi}
Let $\cS$ be a family of immersed oriented surfaces in $M$. We say that $\cS$ is a \emph{transitive family} if the family of Legendrian lifts $\{\cL_S : S \in \cS\}$ satisfies:
 \begin{enumerate}
 \item
Each $\cL_S$ is an embedding into $G^2(M)$. 
\item
For every $(p,\Pi_p)\in G^2(M)$ there exists a unique $S=S(p,\Pi_p)\in \cS$ with $(p,\Pi_p)\in \cL_S$.
\item
 The family $\cS=\{S(p,\Pi_p):(p,\Pi_p)\in G^2(M)\}$ is $C^3$ with respect to $(p,\Pi_p)$.
 \end{enumerate}
\end{definition}

If we endow $M$ with a Riemannian metric $\esiz,\esde$, then $G^2(M)$ is naturally identified the unit tangent bundle $TU(M)$ of $(M,\esiz,\esde)$, and the map $\cL_{\Sigma}$ can be written as $\cL_{\Sigma}(q)= (q,N(q))$, where $N$ denotes the unit normal of $\Sigma$.

The family of all planes in $\R^3$, or the one of all horospheres in the hyperbolic three-space $\H^3$, are trivial examples of transitive families. Also, if $S\subset \R^3$ denotes a compact, strictly convex surface in $\R^3$, the family $\{p+S: p\in \R^3\}$ of all its translations also constitutes a transitive family in $\R^3$. Another example of transitive family $\cS$ in $\R^3$ is obtained as follows: consider a complete strictly convex graph $S\subset \R^3$ over a smooth convex bounded open region $D\subset \R^2$, and assume that $S$ converges $C^3$ asymptotically to the cylinder $\parc D  \times \R$. Then, the family $\cS$ given as the set whose elements are $S$, the cylinder $\parc D\times \R$, the $\pi$-rotation of $S$ with respect to some horizontal straight line, and all possible translations in $\R^3$ of these surfaces, is a transitive family in $\R^3$.

This type of constructions of transitive families using translations on the ambient space can be easily generalized to the context of \emph{metric Lie groups} (i.e. three-dimensional Lie groups endowed with some left invariant metric), by means of the concept of \emph{left invariant Gauss map} of a surface (see, e.g. \cite{GM,mpe11,MMP,mmpr0}). In particular, for specific applications, we will use in Section 4 the following construction:

\begin{example}\label{ejeintro}
\emph{Suppose that $M$ is diffeomorphic to $\R^3$ or $\S^3$, and endow $M$ with some metric Lie group structure. Let $S$ be an immersed sphere in $M$ with the property that its left invariant Gauss map $g:S\flecha \S^2$ with respect to this metric Lie group structure is a diffeomorphism. Then the family of $S$ and all of its left translations in $M$ is a transitive family in $M$. We observe that in this case the sphere $S$ may be non-embedded, see Remark 3.5 in \cite{MMP}}.
\end{example}

Informally, we will say that a class of immersed surfaces in $M$ is \emph{modeled by an elliptic PDE} if its elements are characterized locally as solutions to some elliptic PDE when viewed as graphs over each of their tangent planes. We formalize this idea next.

Let $\Phi=\Phi(x,y,z,p,q,r,s,t)\in C^{1,\alfa}(\cU)$, where $\cU\subset \R^8$ is an open convex set. Following \cite{Ho}, we will say that the second order PDE in two variables 
 \begin{equation}\label{fulpde}
 \Phi(x,y,u,u_x,u_y,u_{xx},u_{xy},u_{yy})=0
 \end{equation}
is \emph{absolutely elliptic} if $\Phi_r>0$ and $4\Phi_r \Phi_t - \Phi_s^2>0$ hold on $\cU$. Up to a change of variables, we will assume that the convex domain $\cU\subset \R^8$ intersects the vector space $x_1=\cdots = x_5=0$ of $\R^8$.

A function $u\in C^2(D)$ on some planar domain $D\subset \R^2$ is a solution to $\Phi=0$ if $$\{(x,y,u,u_x,u_y,u_{xx},u_{xy},u_{yy}): (x,y)\in D\}\subset \cU,$$ and $u=u(x,y)$ satisfies \eqref{fulpde}.

Let $\Upsilon$ denote the set of pairs $(p,\Pi)$, where $p\in M$ and $\Pi$ is an oriented $2$-plane in $T_pM$. Let us define for each $(p,\Pi)\in \Upsilon$ an absolutely elliptic PDE $\Phi_{(p,\Pi)}=0$, in terms of some local coordinates $(x,y,z)$ in $M$ around $p$, so that in these coordinates $p=(0,0,0)$ and $\Pi ={\rm span} \{\parc_x ({\bf 0}), \parc_y ({\bf 0})\}$ with its usual orientation. We call $(x,y,z)$ \emph{adapted coordinates} to $(p,\Pi)\in \Upsilon$. We will say that the family 
 \begin{equation}\label{defield}
\{\Phi_{(p,\Pi)}=0 : (p,\pi)\in \Upsilon\}
 \end{equation}
 is an \emph{elliptic PDE field on $M$}. Observe that we do not assume in general any continuity with respect to $(p,\Pi)$ in the choices of the absolutely elliptic PDEs $\Phi_{(p,\Pi)}=0$. 
 
A \emph{solution to the elliptic PDE field} \eqref{defield} is an immersed oriented surface $\Sigma$ in $M$ such that for every $q\in \Sigma$ we have that $\Sigma$ is a solution to the absolutely elliptic PDE $\Phi_{(q,T_q\Sigma)} =0$ when we view $\Sigma$ as a sufficiently small graph over $(q,T_q\Sigma)$; that is, when we view $\Sigma$ around $q$ as a graph $z=u(x,y)$ w.r.t. the coordinates $(x,y,z)$ adapted to $(q,T_q\Sigma)$, so that $u\in C^2(D(0;\epsilon))$ for $\epsilon>0$ small enough, and $u(0,0)=u_x(0,0)=u_y(0,0)=0$.

\begin{definition}\label{defieli}
We say that a class $\cA$ of immersed oriented surfaces in $M$ is \emph{modeled by an elliptic PDE} if its elements coincide with the space of solutions of an elliptic PDE field.
\end{definition}

One should note the generality of this definition: if a class of surfaces $\cA$ in a three-manifold $M$ is defined by some differential geometric condition that can be rewritten as an elliptic PDE over each tangent plane, then $\cA$ is modeled by an elliptic PDE in the sense of Definition \ref{defieli}. A very particular case is given by the surfaces of constant mean curvature $H\in \R$ in a Riemannian three-manifold $(M,g)$.

Our main theorem solves the Hopf uniqueness problem for arbitrary classes of surfaces modeled by elliptic PDEs in arbitrary orientable three-manifolds, provided there exists a transitive family of examples within the class:

\begin{theorem}\label{main1}
Let $\cA$ be a class of immersed oriented surfaces in $M$ such that:
 \begin{enumerate}
 \item
$\cA$ is modeled by an elliptic PDE.
 \item
There is a transitive family $\cS\subset \cA$. We will call them the \emph{candidate surfaces of $\cA$}.
\end{enumerate}
Then, any immersed sphere $\Sigma\in \cA$ is a candidate sphere, i.e. $\Sigma\in \cS$.
\end{theorem}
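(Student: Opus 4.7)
The plan is to adapt Hopf's classical proof to this very general setting: compare $\Sigma$ pointwise with its tangent candidates in $\cS$, extract from the comparison a Hopf-type quadratic differential on $\Sigma$, and show via a Poincar\'e-Hopf argument on $\S^2$ that this differential must vanish, forcing $\Sigma \in \cS$.

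I would begin with the local comparison. For each $q \in \Sigma$, transitivity of $\cS$ yields a unique tangent candidate $S_q := S(q, T_q\Sigma)$, and the assignment $q \mapsto S_q$ is $C^3$. In adapted coordinates at $(q, T_q\Sigma)$, write $\Sigma$ and $S_q$ as graphs $z = u(x,y)$ and $z = u_q(x,y)$ of the same absolutely elliptic PDE $\Phi_{(q, T_q\Sigma)} = 0$. Using $\Phi \in C^{1,\alpha}$ and the convexity of $\cU$, the identity $\Phi(u) - \Phi(u_q) = \int_0^1 \frac{d}{dt}\Phi(u_q + t(u - u_q))\,dt = 0$ yields a linear elliptic operator $L_q$ with continuous coefficients such that $w := u - u_q$ solves $L_q w = 0$ with $w(0,0) = w_x(0,0) = w_y(0,0) = 0$. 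The Hartman--Wintner theorem then gives at each $q$ the alternative: either $w \equiv 0$ in a neighborhood of the origin (so $\Sigma = S_q$ locally), or $w = P_{n(q)}^{(q)} + o(r^{n(q)})$ where $P_{n(q)}^{(q)}$ is a non-zero homogeneous polynomial of degree $n(q) \geq 2$, harmonic with respect to the principal symbol of $L_q$.

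Next, let $T := \{q \in \Sigma : \Sigma = S_q \text{ in a neighborhood of } q\}$; this set is open. If $T = \Sigma$, then at any $q \in T$ with neighborhood $V_q \subset T$ on which $\Sigma = S_q$, each $q' \in V_q$ satisfies $T_{q'}\Sigma = T_{q'}S_q$, so the uniqueness clause in Definition~\ref{defitransi}(2) forces $S_{q'} = S_q$; hence $q \mapsto S_q$ is locally constant on $T$, globally constant by connectedness of $\Sigma$, and $\Sigma$ coincides locally with a single $S_0 \in \cS$ at every point, which gives $\Sigma = S_0$. Otherwise $T \neq \Sigma$. At each $q$ where $n(q) = 2$, the leading term $P_2^{(q)}$ is a non-zero traceless (in the $L_q$-sense) symmetric $2$-form on $T_q\Sigma$, and its two orthogonal nodal lines define a principal cross field $\cF$ on $\Sigma$, whose singular set consists of the points with $n(q) > 2$ together with $\partial T$ if $T \neq \emptyset$. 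Bers' similarity principle, applied to $L_q w = 0$ relative to a fixed conformal structure on $\Sigma$ (induced from any auxiliary metric on $M$), shows that the quadratic differential $Q$ assembled from the $P_2^{(q)}$ is \emph{generalized holomorphic}: its zeros are isolated, each of strictly positive order, so the singularities of $\cF$ all have strictly negative index. Poincar\'e--Hopf then yields $\sum_q \mathrm{ind}_q(\cF) = \chi(\Sigma) = 2$, contradicting the negativity of all the summands. The residual case $Q \equiv 0$ with case~(b) of Hartman--Wintner holding somewhere is ruled out by iterating the leading-order analysis: $P_2^{(q)} \equiv 0$ on $\Sigma \setminus T$ forces, via higher-order Taylor expansions and unique continuation for $L_q w = 0$, that $w \equiv 0$ in a neighborhood of every $q$, i.e.\ $T = \Sigma$.

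The hardest point will be patching the $q$-dependent local data $P_2^{(q)}$---computed in adapted coordinates that vary with $q$, via a linear operator $L_q$ whose principal symbol also varies---into a globally defined, continuous section of the symmetric square of $T^*\Sigma$ to which Bers' similarity principle can be applied coherently. In particular, the notion of ``harmonic'' polynomial $P_{n(q)}^{(q)}$ is defined relative to the $q$-dependent symbol of $L_q$, and one must reconcile this with a single conformal structure on $\Sigma$ so that $Q$ and the nodal line field $\cF$ are genuine global objects with isolated, negative-index singularities. Controlling the behavior of $\cF$ across $\partial T$ when $T$ is a proper, non-empty open subset of $\Sigma$ is a further delicate point that must be settled along the way.
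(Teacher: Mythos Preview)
Your overall strategy---pointwise comparison with tangent candidates, extraction of a Hopf-type quadratic form, and a Poincar\'e--Hopf index argument---matches the paper's. However, the step you yourself flag as ``the hardest point'' is a genuine gap, and you do not close it. The difficulty is real: at each $q$ you compare $\Sigma$ with a \emph{different} candidate $S_q$, so the function $w = u - u_q$ and the linear operator $L_q$ both depend on the base point. There is no single equation on $\Sigma$ to which Bers' similarity principle applies, and the collection $\{P_2^{(q)}\}_q$ is not, on its face, the $2$-jet of any globally defined object. Your ``quadratic differential $Q$ assembled from the $P_2^{(q)}$'' therefore has no definition, and the claim that it is generalized holomorphic is unjustified. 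The subsidiary issues you raise (behavior across $\partial T$, the residual case $Q\equiv 0$) are symptoms of the same missing global object.

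The paper resolves this by reversing the order of construction. It first defines a \emph{global} $C^1$ symmetric bilinear form $\sigma := II - \cM$ on any immersed $\Sigma$, where $II$ is the second fundamental form of $\Sigma$ (for an auxiliary Riemannian metric on $M$) and $\cM(q)$ is the second fundamental form at $q$ of the tangent candidate $S_q$, pulled back via the Legendrian lift $(\psi,N):\Sigma\to TU(M)$. Transitivity and $C^3$-dependence of $\cS$ make $\cM$, hence $\sigma$, globally well-defined and $C^1$ with no patching whatsoever. Only \emph{after} this global tensor is in hand does the paper do local analysis at a zero $q$ of $\sigma$: it compares $\Sigma$ with the \emph{single fixed} candidate $S=S_q$, writes both as graphs $h,h^*$, applies Bers' theorem to the linearized equation to get $h-h^*=w+o(r^k)$ with $w$ harmonic homogeneous of degree $k\ge 2$, and then shows directly that the matrix of $\sigma$ equals $\nabla^2 w + o(r^{k-2})$. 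The key computation here splits $\sigma = (II-II_S)+(II_S-\cM)$; the second term is $o(r^{k-2})$ because $\cM$ depends only on first-order data of $\Sigma$. The null directions of $\sigma$ near $q$ are then those of $\nabla^2 w$, hence of negative index, and a one-line connectedness argument on the zero set of $\sigma$ replaces your entire discussion of $T$, $\partial T$, and the residual case.
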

In particular, if the transitive family $\cS$ contains no spheres, this is a non-existence theorem. More generally, we will prove:

\begin{theorem}\label{main2}
Let $\cA$ be a class of immersed oriented surfaces in $M$ as in the statement of Theorem \ref{main1}.
Then there exists a symmetric $C^1$ bilinear form $\sigma$ globally defined on any $\Sigma\in \cA$, such that:
 \begin{enumerate}
\item
$\sigma$ vanishes at $p\in \Sigma$ if and only if $\Sigma$ has at $p$ a contact of order $k\geq 2$ with some $S\in \cS$.
 \item
$\sigma$ vanishes identically on $\Sigma$ if and only if $\Sigma$ is an open piece of some $S\in \cS$.
 \item
If $\sigma$ does not vanish identically on $\Sigma$, then $\sigma$ only has isolated zeros, and the null directions of $\sigma$ determine on $\Sigma$ two $C^1$ line fields with isolated singularities of negative index.
\end{enumerate}
\end{theorem}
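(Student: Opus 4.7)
The natural object is the difference of the second fundamental forms of $\Sigma$ and of the unique candidate tangent to it at each point, from which I will extract the qualitative properties via the Bers--Vekua theory applied to a linear elliptic equation for the difference of the two local graph functions.

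For each $q\in\Sigma$ let $S_q:=S(q,T_q\Sigma)\in\cS$ be the unique candidate furnished by the transitive family. Because $\Sigma$ and $S_q$ share the same oriented tangent plane at $q$, their second fundamental forms lie in the same space $\mathrm{Sym}^2T_q^*\Sigma$, so
$$\sigma_q(X,Y):=II_\Sigma(q)(X,Y)-II_{S_q}(q)(X,Y)$$
is well defined. The smoothness of $\Sigma$ together with the $C^3$-dependence of $\cS$ on $(q,\Pi)$ (Definition~\ref{defitransi}) makes $\sigma$ a $C^1$-section of $\mathrm{Sym}^2T^*\Sigma$, and since both surfaces are tangent at $q$, they have contact of order $\geq 2$ there precisely when $\sigma_q=0$, giving (1).

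Now fix $q_0\in\Sigma$ and take coordinates $(x,y,z)$ adapted to $(q_0,T_{q_0}\Sigma)$; write $\Sigma$ as $z=u(x,y)$ and $S_{q_0}$ as $z=v(x,y)$. Both $u,v$ solve $\Phi_{(q_0,T_{q_0}\Sigma)}=0$, and integrating $d\Phi/dt$ along the segment joining their two $8$-jets yields a homogeneous linear equation $Lw=0$ for $w:=u-v$, with H\"older continuous coefficients and uniformly elliptic principal part (using $\Phi_r>0$ and $4\Phi_r\Phi_t-\Phi_s^2>0$). In isothermal coordinates $\zeta=\xi+i\eta$ for $L$, the $(2,0)$-part of $\sigma$ becomes a complex function $\varphi$ which vanishes at $q_0$ iff $\sigma_{q_0}=0$, and the scalar equation $Lw=0$ translates into a Bers--Vekua equation $\partial_{\bar\zeta}\varphi=A\varphi+B\bar\varphi$ with bounded coefficients. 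By the Bers similitude principle, either $\varphi\equiv0$ in a neighbourhood of $q_0$, or $\varphi(\zeta)=c\zeta^{k}+o(|\zeta|^{k})$ with $c\neq 0$ and $k\geq 0$, so $q_0$ is an isolated zero of $\sigma$ and the null directions of $\mathrm{Re}(\varphi\,d\zeta^2)$ define two $C^1$ line fields with an isolated singularity of index $-k/2<0$. Covering $\Sigma$ by such charts gives (3).

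For (2), the delicate implication is that $\sigma\equiv 0$ forces $\Sigma$ to lie in some $S\in\cS$. By the transitivity hypothesis, the Legendrian lifts $\{\cL_S:S\in\cS\}$ foliate the Grassmannian $G^2(M)$, and hence determine a $C^2$ integrable $2$-plane distribution $\cD$ on $G^2(M)$ whose leaves are precisely these lifts. Writing the tangent plane of $\cL_S$ at $(p,\Pi)$ as the graph of $II_S(p)$, the vanishing of $\sigma_q$ is exactly the condition that $d\cL_\Sigma(q)(T_q\Sigma)\subset\cD_{\cL_\Sigma(q)}$. Consequently, when $\sigma\equiv 0$ the image $\cL_\Sigma(\Sigma)$ is an integral surface of $\cD$, so by Frobenius uniqueness it lies in a single leaf $\cL_S$, which means that $\Sigma$ is an open piece of $S\in\cS$ (the converse in (2) is immediate). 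The main technical obstacle I anticipate is the derivation of the Bers--Vekua equation for $\varphi$: a priori $\sigma_q$ is computed in a different adapted frame at every point of $\Sigma$, so one must verify that the cross terms produced by changing frames along $\Sigma$ either cancel or are absorbed into the lower-order coefficients $A,B$, using the ellipticity of $\Phi$ and the $C^3$-regularity of the candidate family.
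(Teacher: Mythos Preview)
Your definition of $\sigma$ and item (1) match the paper. Your Frobenius argument for item (2) is a genuine alternative to the paper's approach (which instead obtains (2) as a byproduct of the local analysis for (3) via a connectedness argument), and it is correct: the Legendrian lifts of the candidates foliate $G^2(M)$, the tangent distribution $\cD$ is encoded by the second fundamental form, and $\sigma\equiv 0$ says precisely that $\cL_\Sigma$ is tangent to $\cD$.

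The gap is in item (3), and it is exactly the one you flag at the end. You want $Lw=0$ to yield a Bers--Vekua equation $\partial_{\bar\zeta}\varphi=A\varphi+B\bar\varphi$ for the $(2,0)$-part $\varphi$ of $\sigma$, but $w=u-v$ compares $\Sigma$ with the \emph{fixed} candidate $S_{q_0}$, while $\sigma$ is built from the \emph{varying} candidates $S_q$. Thus $\varphi$ differs from $w_{\zeta\zeta}$ by terms depending $C^1$ on the first jet of $u$ along $\Sigma$, and there is no a priori reason these close up into the form $A\varphi+B\bar\varphi$; in this generality (arbitrary elliptic PDE field, no symmetry) such an equation is not available. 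The paper sidesteps the issue entirely: it never derives a PDE for $\sigma$. Instead it writes
\[
\sigma=(II_\Sigma-II_S)+(II_S-\cM),\qquad S=S_{q_0}\ \text{fixed},
\]
applies Bers' theorem to $w=h-h^*$ to get $w=(\text{harmonic homogeneous of degree }k)+o(r^k)$, computes the first bracket as $\mathrm{Hess}(w)+o(r^{k-2})$, and observes that the second bracket depends $C^1$ only on the $1$-jet of $h$, hence is $o(r^{k-2})$. This gives $\sigma=\mathrm{Hess}(w)+o(r^{k-2})$ directly, and since $w$ is harmonic the leading Hessian is traceless, so $\sigma$ is Lorentzian near its zeros and its null lines have the negative index read off from $w_{\zeta\zeta}\,d\zeta^2$. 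A side effect is that the issue you gloss over---that the null directions of $\mathrm{Re}(\varphi\,d\zeta^2)$ are those of the traceless part of $\sigma$, not of $\sigma$ itself---evaporates, because the leading term of $\sigma$ is already traceless.
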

Note that Theorem \ref{main1} follows from Theorem \ref{main2} by the Poincaré-Hopf theorem. Similarly, Theorem \ref{main2} implies that \emph{if $\cA$ is a class of surfaces as in Theorem \ref{main1}, then any compact surface of genus $g\geq 1$ of $\cA$ has at most $4g-4$ points where it has a contact of order $k\geq 2$ with some candidate surface $S\in \cS$}. This provides a wide extension of a well-known similar theorem on umbilics of compact CMC surfaces in $\R^3$.

\subsection{Discussion of the result}

Theorem \ref{main1} provides two sufficient conditions in order to solve the Hopf uniqueness problem for a class of surfaces $\cA$ in a three-manifold $M$. The ellipticity condition for $\cA$ is a standard geometric hypothesis, and without it (or some reformulation of it), uniqueness of spheres seems unlikely to hold. Thus, the only restrictive hypothesis in Theorem \ref{main1} is the \emph{existence} of a transitive family of surfaces $\cS\subset  \cA$. One should observe that without any hypothesis other than ellipticity for the class of surfaces $\cA$, the Hopf uniqueness problem for $\cA$ seems unclear to formulate, and hopeless to be solved in that generality. Also, one should observe that the existence of a transitive family $\cS\subset \cA$ implies, in particular, that the field of elliptic PDEs that defines $\cA$ has solutions.

For many important cases of surfaces modeled by elliptic PDEs, the existence of a transitive family is trivial. For example, for elliptic Weingarten functionals in $\R^3$, the family of round spheres of an adequate radius in $\R^3$ forms a transitive family. In many other natural uniqueness problems, the existence of a transitive family is basically given as a hypothesis (see for example the Alexandrov conjecture in Section \ref{alexsec}). In other prescribed curvature problems, transitive families can be obtained as the family of translations of a convex solution, and the existence of such convex solutions has been established by elliptic theory methods, see e.g. \cite{GG,P2}.

A case of special geometric interest consists of the classes of surfaces $\cA$ modeled by elliptic PDEs that are invariant under ambient rotations. This type of families includes, for instance, the case of elliptic Weingarten surfaces in rotationally symmetric homogeneous three-manifolds, and in particular the case of CMC surfaces in these spaces. A very influential result in this context is the theorem by U. Abresch and H. Rosenberg \cite{AR1,AR2} that \emph{immersed CMC spheres in rotationally symmetric homogeneous three-manifolds are spheres of revolution}.

Theorem \ref{main1} can be applied to this rotationally symmetric context, since in these conditions one can study the existence of a transitive family within the class by ODE analysis. This line of inquiry is studied by the authors in \cite{GM3}, which is a natural continuation of the present paper. Specifically, in \cite{GM3} we give a substantial generalization of the Abresch-Rosenberg theorem, going from CMC surfaces to more general classes of surfaces modeled by rotationally symmetric elliptic PDEs in homogeneous three-manifolds.

Nonetheless, in general, the problem of existence of a transitive family of solutions can be very hard for some classes of surfaces. In \cite{mmpr0} the second author proved jointly with Meeks, Pérez and Ros that if $X$ is a homogeneous manifold diffeomorphic to $\S^3$, then for every $H\in \R$ there exists an immersed sphere $S_H$ of constant mean curvature $H$, which is actually the only immersed sphere with constant mean curvature $H$ in $X$ up to congruence. The existence part in \cite{mmpr0} shows that, for each $H\in \R$, the family of spheres in $X$ congruent to $S_H$ constitutes a transitive family (since their left invariant Gauss maps in $X$ are diffeomorphisms). Once we know this existence, the uniqueness statement proved in \cite{mmpr0} can be obtained alternatively as a direct consequence of Theorem \ref{main1}.

\section{Proof of the general uniqueness theorem}\label{sec:mainth}
Let $\cA$ be a class of surfaces in the conditions of Theorem \ref{main1} in the orientable three-manifold $M$. We consider on $M$ an auxiliary, arbitrary, Riemannian metric $\esiz,\esde$. For each candidate surface $S\in \cS$ we consider its second fundamental form $II_S$ in $(M,\esiz,\esde)$. Then, the property that $\cS$ is a transitive family of surfaces allows to define on the unit tangent  bundle $TU(M)$ the map $\Lambda$ that assigns to each $(p,v) \in TU(M)$ the second fundamental form $II_S$ evaluated at $p$ of the unique candidate surface $S$ that has at $p$ the unit normal $v$. 

It is useful for our purposes to write this map $\Lambda$ in coordinates. Given $(q,w)\in TU(M)$, consider coordinates $(x,y,z)$ in $M$ around $q=(0,0,0)$ with $g_{ij}(q)=\delta_{ij}$ for the metric $\esiz,\esde$, and so that $w=(0,0,1)$ in the basis $\{\parc_x,\parc_y,\parc_z\}$ at the origin. In these coordinates we can consider some $\mathcal{O}\subset \R^3\times \S_+^2$ to be a neighborhood of $(q,w)$ in $TU(M)$. Note that if $(p,v)\in \mathcal{O}$, a basis for the tangent plane at $p$ of the candidate sphere $S$ determined by $(p,v)$ is 
 \begin{equation}\label{defe1}
E_1(p,v)= (1,0,-\eta_1 /\eta_3),\hspace{1cm} E_2 (p,v) = (0,1,-\eta_2 /\eta_3),\end{equation}
where $(\eta_1,\eta_2,\eta_3):= (v_1,v_2,v_3)\cdot \cG(p)$; here $\cG(p)=(g_{ij}(p))$ is the matrix of the metric at $p$. This lets us define the $2\times 2$ matrix $M_{II} (p,v)$ associated to the second fundamental form $II_S$ of $S$ at $p$ in terms of the basis $\{E_1(p,v),E_2(p,v)\}$. In this way, we can regard the map $\Lambda$ in coordinates as 
 \begin{equation}\label{landef}
 \Lambda : \mathcal{O}\subset \R^3\times \S_+^2 \flecha \cM_{2\times 2} (\R), \hspace{1cm} \Lambda (p,w)=M_{II}(p,w).
 \end{equation} By the regularity assumed for the family of candidate surfaces $\cS\subset \cA$, we see that $\Lambda$ is a $C^1$ map.

Let now $\psi:\Sigma\flecha M$ be any immersed oriented surface in $M$ with unit normal $N$, and let $\cL_{\psi}=(\psi,N):\Sigma\flecha TU(M)$ be its associated Legendrian immersion into $TU(M)$. Then, the map $\cM:=\Lambda \circ \cL_{\psi}$ defines a symmetric $C^1$ bilinear form $\cM$ on $\Sigma$.

Let $II$ denote the second fundamental form of $\Sigma$. Then we can define on $\Sigma$ the symmetric $C^1$ bilinear form 
\begin{equation}\label{sigmadef}
\sigma:= II - \cM : T \Sigma\times T \Sigma \flecha C^1(\Sigma).
\end{equation}
Two properties follow automatically from the definition:
 \begin{enumerate}
 \item
The tensor $\sigma$ vanishes identically on every candidate surface $S\in \cS$.
 \item
The tensor $\sigma$ vanishes at some point $q\in \Sigma$ if and only if $\Sigma$ has at $q$ a contact of order $k\geq 2$ with some candidate surface $S$.
 \end{enumerate}
Our objective is to understand the behavior of $\sigma$ around its zeros for surfaces $\Sigma\in \cA$. So, assume that $\Sigma\in \cA$, let $q\in \Sigma$ and consider the unique candidate surface $S$ that has an oriented first order contact point with $\Sigma$ at $q$. We can parametrize locally $\Sigma$ in a neighborhood of $q$ as an immersion $\psi(u,v)=(u,v,h(u,v))$ with respect to some local coordinates $(x,y,z)$ in $M$ around $q=(0,0,0)$, so that $\psi$ is defined on some disk $D(0;\delta)$, $\psi(0,0)=(0,0,0)$ and $\nabla h (0,0)= (0,0)$. Let $\overline{\nabla}$ denote the Riemannian connection of $M$ and $\cG:=(g_{ij})$ be the matrix of the metric $\esiz,\esde$ with respect to the coordinates $(x,y,z)$. Then, writing vectors in coordinates with respect to $\{\parc_x,\parc_y,\parc_z\}$, we obtain that the unit normal $N$ associated to $\psi$ is given by 
$$N=\frac{Z_1\times Z_2}{||Z_1\times Z_2||}, \hspace{1cm} Z_1 = \psi_u \cdot (\cG\circ \psi), \hspace{1cm} Z_2= \psi_v \cdot (\cG\circ \psi),$$ where $\times$ is the usual cross product in $\R^3$. Let $e=\esiz \overline{\nabla}_{\psi_u} \psi_u,N\esde$, $f= \esiz \overline{\nabla}_{\psi_u} \psi_v,N\esde$ and $g=\esiz \overline{\nabla}_{\psi_v} \psi_v,N\esde$ denote the coefficients of the second fundamental form $II$ of $\psi(u,v)$. A standard computation shows that there exist smooth functions $\Phi^e,\Phi^f,\Phi^g$, with $\Phi^e= \Phi^e(x,y,z,p,q,w)$, etc., such that 
 \begin{equation}\label{efg}
e= \Phi^e (u,v,h,h_u,h_v,h_{uu}), \hspace{0.5cm} f= \Phi^f (u,v,h,h_u,h_v,h_{uv}), \hspace{0.5cm} g= \Phi^g (u,v,h,h_u,h_v,h_{vv}).
\end{equation} 
Let now $\psi^*(u,v)=(u,v,h^*(u,v))$ be a parametrization of the candidate sphere $S$ around $q$, also with respect to the coordinates $(x,y,z)$ in $M$. Since $S,\Sigma$ have (at least) a first order contact point at $q$, we have $h^*(0,0)=0$, $\nabla h^*(0,0)=(0,0)$. 
The coefficients of the second fundamental form $e^*,f^*,g^*$ of $\psi^*$ then satisfy \eqref{efg} in terms of $h^*$ and its derivatives (but for the same functions $\Phi^e,\Phi^f,\Phi^g$). Thus,
 \begin{equation}\label{emenose}
 e-e^* = \Phi_1^e (h-h^*) + \Phi_2^e (h_u-h_u^*) + \Phi_3^e (h_v -h_v^*) + \Phi_4^e (h_{uu} - h_{uu}^*),
 \end{equation}
where $\Phi_1^e(u,v):=\int_0^1 \frac{\parc \Phi^e}{\parc z} (u,v,h^{\tau},h^{\tau}_u,h^{\tau}_v,h^{\tau}_{uu}) d\tau$ being $h^{\tau}(u,v):=(1-\tau) h(u,v) + \tau h^*(u,v)$, etc. The functions $\Phi_i^e (u,v)$ are smooth in a neighborhood of $(0,0)$. Similar formulas hold for $f-f^*$ and $g-g^*$. Since the unit normal of both $\psi$ and $\psi^*$ at $(0,0)$ is $(0,0,1)$, a computation shows that
 \begin{equation}\label{phi4}
\Phi_4^e (0,0)= \Phi_4^f (0,0)=\Phi_4^g (0,0)=1.
 \end{equation}

\begin{figure}[h]
\begin{center}
\includegraphics[width=10cm]{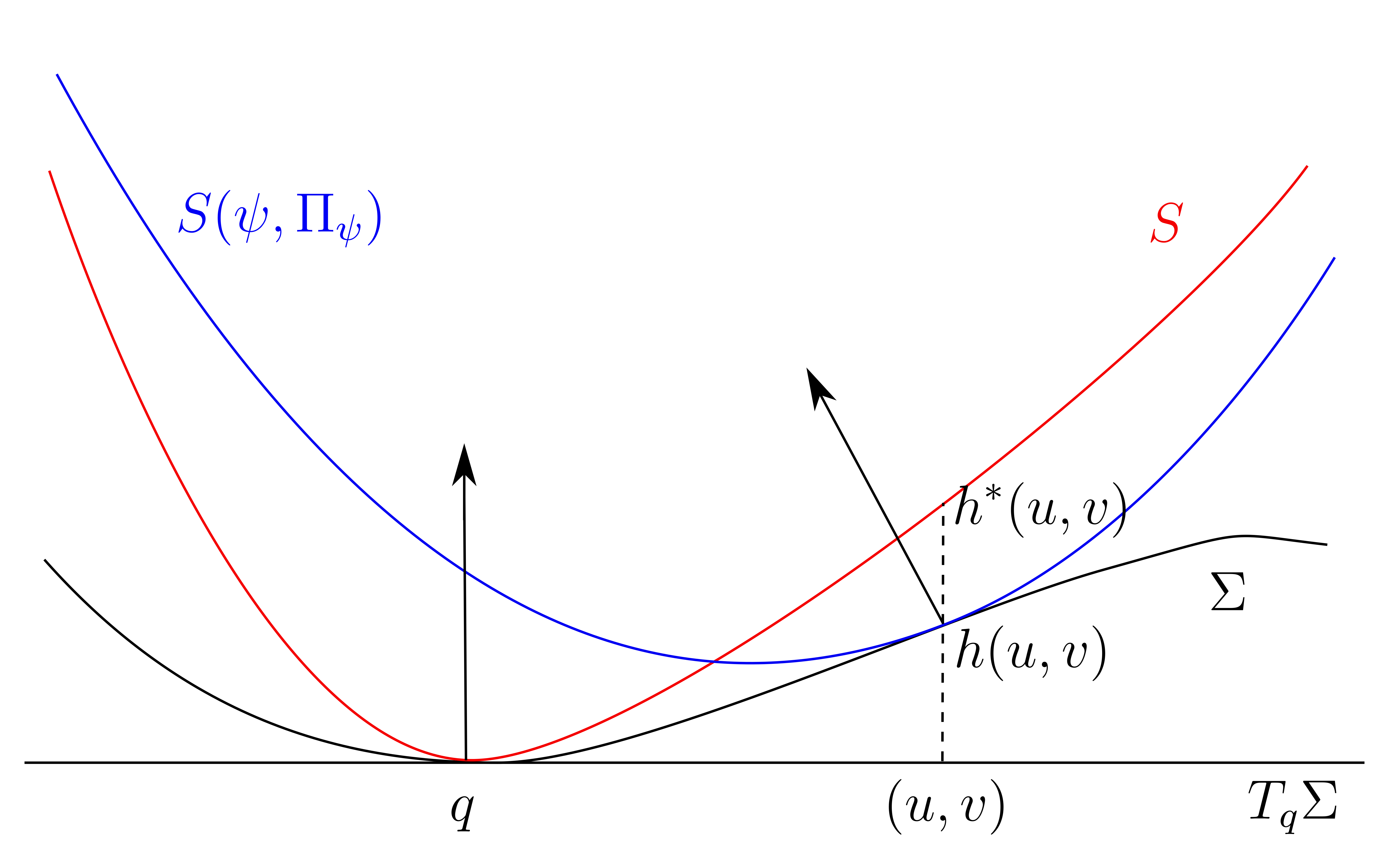}
\caption{Comparison of $\Sigma$ with its tangent candidate at each point.} \label{compara}
\end{center}
\end{figure}

Next, observe that as both $\Sigma,S$ belong to the class $\cA$, there exists a $C^{1,\alfa}$-smooth elliptic PDE \eqref{fulpde} with respect to the $(x,y,z)$ coordinates so that $\Sigma,S$ can be seen as graphs of solutions to \eqref{fulpde}. That is, the functions $h,h^*$ are solutions to \eqref{fulpde}. In these conditions, it is well known that the function $h-h^*$ satisfies a linear homogeneous elliptic PDE with $C^{0,\alfa}$ coefficients. Hence, after a linear change of coordinates in the $(u,v)$-variables, it follows from Bers' theorem \cite{Be} that if $h-h^*$ is not identically zero around the origin there exists a homogeneous harmonic polynomial $w(u,v)$ of degree $k\geq 2$ such that 
 \begin{equation}\label{hw}
h(u,v)-h^*(u,v)= w(u,v) + o(\sqrt{u^2+v^2})^k.
 \end{equation} 

Let now $A_{\sigma}$, $A_{II}$ and $A_{\cM}$ be the matrices associated to $\sigma$, $II$ and $\cM$ at the point $\psi(u,v)$ with respect to the $(\psi_u,\psi_v)$-basis, and let $B_{II}$ denote the matrix associated to $II_S$ at  the point $\psi^* (u,v)$ with respect to the $(\psi_u^*,\psi_v^*)$-basis, where $II_S$ is the second fundamental form of $S$. Then, from \eqref{sigmadef} we have 
 \begin{equation}\label{asifor}
A_{\sigma} = A_{II} - A_{\cM} = (A_{II} - B_{II}) + (B_{II} - A_{\cM}) :(u,v)\mapsto \cM_{2\times 2} (\R).
 \end{equation} 
 
Combining \eqref{emenose}, \eqref{phi4} and \eqref{hw} we deduce that
 \begin{equation}\label{1red}
 A_{II}-B_{II} = \left(\begin{array}{ccc} w_{uu} & w_{uv} \\ w_{uv} & w_{vv} \end{array} \right) + o(\sqrt{u^2+v^2})^{k-2}.
 \end{equation}

Let us now compute $B_{II}- A_{\cM}$. By definition, $\cM=\Lambda \circ \cL_{\psi}$. By the local representation in coordinates of $\Lambda$ in \eqref{landef}, we may regard $\Lambda\circ \cL_{\psi}$ as a map taking values on $\cM_{2\times 2} (\R)$, and whose value at each $p\in \Sigma$ is the matrix of the second fundamental form $II_S$ at $p$ of the candidate surface $S$ determined by $(p,N(p))$, with respect to the basis \eqref{defe1} of $T_pS$. Now, we can observe that the vector fields $\{E_1, E_2\}$ in \eqref{defe1}, when restricted to be vector fields along $\cL_{\psi}$, are precisely $\psi_u=(1,0,h_u)$ and $\psi_v =(0,1,h_v)$. All of this shows that there exists a $C^1$ map $\Psi (x,y,z,p,q)$ from a neighborhood of the origin in $\R^5$ into $\cM_{2\times 2}(\R)$, such that 
 \begin{equation}\label{foram}
 A_{\cM} = \Psi (u,v,h,h_u,h_v) .
 \end{equation}
Regarding $B_{II}$, we note that since $II_S=\cM$ on $S$, the same process as above shows that 
 \begin{equation}\label{forb2}
 B_{II} = \Psi (u,v,h^*,h_u^*,h_v^*),
 \end{equation}
 for the same $C^1$ function $\Psi$ defined previously. Thus, we obtain using \eqref{hw} that
  \begin{equation}\label{ter2}
 B_{II} - A_{\cM} = o (\sqrt{u^2+v^2})^{k-2}.
  \end{equation}
Putting together \eqref{asifor}, \eqref{1red}, \eqref{ter2} we conclude that either $h=h^*$ around $(0,0)$ (which implies $\sigma=0$ around $q$), or
 \begin{equation}\label{eqsig}
 A_{\sigma} = \left(\begin{array}{ccc} w_{uu} & w_{uv} \\ w_{uv} & w_{vv} \end{array} \right) + o(\sqrt{u^2+v^2})^{k-2} ,
 \end{equation} 
 where $w(u,v)$ is a harmonic homogeneous polynomial of degree $k\geq 2$. Let $\cU:=\{q\in \Sigma: \sigma(q)=0\}$. If $q\in \cU-{\rm int} (\cU)$, we see from \eqref{eqsig} that $q$ is isolated in $\cU$. From here, a connectedness argument shows that either $\sigma$ vanishes identically on $\Sigma$ and $\Sigma$ is an open subset of $S$, or else all points of $\cU$ are isolated. In this second case, \eqref{eqsig} yields that $\sigma$ is a Lorentzian metric on $\Sigma-\cU$, whose null directions at any $q\in \Sigma-\cU$ coincide with the asymptotic directions in $\R^3$ of the graph of the homogeneous harmonic polynomial $w(u,v)$, given by equation ${\rm Re} (w_{\zeta \zeta} d\zeta^2)=0$, where $\zeta:=u+iv$. By harmonicity of $w$, the index of these null directions around $q$ is negative. Thus, these null directions determine a pair of continuous line fields on $\Sigma-\cU$ that present isolated singularities of negative index at the points of $\cU$. This is impossible if $\Sigma$ is diffeomorphic to $\S^2$, by the Poincaré-Hopf index theorem. Thus, $\Sigma$ is an open piece of $S$. In particular, we have proved that $\sigma\equiv 0$ can only hold (even locally) if $\Sigma$ is an open subset of a candidate surface $S\in \cS$.
This proves Theorems \ref{main2} and \ref{main1}.

\section{Geometric applications}\label{geoap}

Let $S\subset \R^3$ be a closed, strictly convex surface, and suppose that $S$ is a solution to some geometric problem invariant by translations in $\R^3$, and whose solutions are determined by a $C^{1,\alfa}$ elliptic PDE over each tangent plane. Then, Theorem \ref{main1} implies that any immersed sphere $\Sigma$ which is also a solution to this geometric problem must be a translation of $S$. A similar claim remains true if we substitute $\R^3$ by $\H^3$ or $\S^3$. In this way, many uniqueness results of surface theory can be seen as direct consequences of Theorem \ref{main1}, among which we quote:

\begin{enumerate}
\item
The theorem by Hartman and Wintner \cite{HW1} that $W$-spheres in $\R^3$, $\S^3$ and $\H^3$ are round spheres ($W$-surfaces are defined by an elliptic $C^2$ Weingarten relation $W(H,K)=0$ between their mean and Gauss curvature). The Hartman-Wintner theorem is, in itself, a generalization of many classical uniqueness theorems, and in particular of Hopf's theorem for CMC spheres mentioned in the introduction.
 \item
The classical theorem by Alexandrov \cite{A1} on the uniqueness of ovaloids in $\R^3$ with a prescribed $C^2$ elliptic relation between its principal curvatures and its Gauss map, see \cite[Theorem 9]{A1} or \cite[Theorem 1]{GWZ}. This Alexandrov theorem, in particular, contains the uniqueness of the solution to the classical Christoffel and Minkowski problems for convex surfaces in $\R^3$; see Section \ref{alexsec} for more details.
 \item
The theorem by the authors \cite{GM} that if two immersed spheres $\Sigma_1,\Sigma_2$ in $\R^3$ have the same mean curvature at points with corresponding unit normals and $\Sigma_1$ is strictly convex, then $\Sigma_2$ is a translation of $\Sigma_1$.
 \item
The solution to the anisotropic Hopf problem by Koiso and Palmer \cite{KP}, i.e. the result that any sphere in $\R^3$ with constant anisotropic mean curvature is similar to the Wulff shape of the corresponding functional.
\end{enumerate}
Observe that Theorem \ref{main1} actually implies that the Koiso-Palmer theorem on uniqueness of the Wulff shape holds not only for the anisotropic mean curvature, but also for arbitrary smooth elliptic anisotropic Weingarten functionals.

The first three theorems mentioned above are unified and generalized in a substantial way by the general corollary to Theorem \ref{main1} that we present below, and that also contains as particular cases some previously known uniqueness results for CMC spheres in homogeneous three-manifolds (see e.g. Theorem 4.7 in \cite{DM} and the uniqueness statement of Theorem 4.1 in \cite{mmpr0}).

In the next corollary, $M$ is a simply connected homogeneous three-manifold not isometric to $\S^2(\kappa)\times \R$. Thus $M$ can be seen as a Lie group endowed with a left-invariant metric (see e.g. \cite{mpe11}) and, as explained in Section 2, we can define for any immersed surface $\Sigma$ in $M$ its \emph{left invariant Gauss map} $\eta:\Sigma\flecha \S^2$, where $\S^2$ is identified with the space of unit vectors of the Lie algebra of $M$. When $M$ is $\R^3$ with its usual abelian Lie group structure, $\eta$ is just the usual Gauss map for surfaces in $\R^3$. As usual, we denote by $\kappa_1\geq \kappa_2$ the principal curvatures of a surface in $M$.

\begin{corollary}\label{corapp}
Let $\Sigma_1,\Sigma_2$ be immersed spheres in $M$ satisfying the prescribed curvature equation 
 \begin{equation}\label{igualW}
 \cW(\kappa_1,\kappa_2,\eta)=0,
  \end{equation} 
where, for each fixed $\xi_0\in \S^2$, the function $\cW(x,y,\xi_0)$ is $C^{1,\alfa}$, symmetric (i.e. $\cW(x,y,\xi_0)=\cW(y,x,\xi_0)$) and satisfies the ellipticity condition
\begin{equation}\label{weinelipal}\frac{\parc \cW}{\parc x}\frac{\parc \cW}{\parc y} >0 .\end{equation}
Then, if the Gauss map of $\Sigma_1$ is a diffeomorphism, $\Sigma_2$ is a left translation of $\Sigma_1$.
\end{corollary}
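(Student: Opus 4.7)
The plan is to apply Theorem \ref{main1} with $\cA$ taken to be the class of all immersed oriented surfaces in $M$ satisfying \eqref{igualW}, and with the candidate family $\cS:=\{L_g(\Sigma_1):g\in M\}$ of all left translations of $\Sigma_1$. Once we verify that $\cA$ is modeled by an elliptic PDE and that $\cS\subset\cA$ is a transitive family, Theorem \ref{main1} forces the immersed sphere $\Sigma_2\in \cA$ to belong to $\cS$, which is exactly the desired conclusion that $\Sigma_2$ is a left translation of $\Sigma_1$.

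To check the first hypothesis, fix $(p,\Pi)\in\Upsilon$ and adapted coordinates $(x,y,z)$ around $p$. A surface of $\cA$ tangent to $\Pi$ at $p$ is locally a graph $z=u(x,y)$ with $u(0,0)=u_x(0,0)=u_y(0,0)=0$; along such a graph the left invariant Gauss map $\eta$ is a smooth function of $(x,y,u,u_x,u_y)$, and the principal curvatures $\kappa_1,\kappa_2$ are the eigenvalues of a symmetric operator whose entries depend smoothly on $(x,y,u,u_x,u_y,u_{xx},u_{xy},u_{yy})$. Because $\cW(\cdot,\cdot,\xi_0)$ is symmetric, $\cW(\kappa_1,\kappa_2,\eta)$ can be rewritten as a $C^{1,\alfa}$ function $\Phi_{(p,\Pi)}$ of the two symmetric invariants of the shape operator (mean and extrinsic curvature) together with $(x,y,u,u_x,u_y)$; it is the symmetry hypothesis that cures the lack of smoothness of $\kappa_1,\kappa_2$ individually at umbilics and produces a well-defined single-valued PDE. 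A direct computation of the linearization with respect to $(u_{xx},u_{xy},u_{yy})$ at a point with coordinates aligned with the principal directions shows that $\Phi_r\approx \cW_x$, $\Phi_t\approx \cW_y$ and $\Phi_s\approx 0$ to leading order, so the ellipticity assumption \eqref{weinelipal} translates into $\Phi_r>0$ and $4\Phi_r\Phi_t-\Phi_s^2>0$; hence $\Phi_{(p,\Pi)}=0$ is absolutely elliptic. Letting $(p,\Pi)$ range over $\Upsilon$ produces the elliptic PDE field defining $\cA$.

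For the second hypothesis, since $M$ is a simply connected homogeneous three-manifold not isometric to $\S^2(\kappa)\times\R$, it admits a metric Lie group structure, so Example \ref{ejeintro} applies: the assumption that the left invariant Gauss map of $\Sigma_1$ is a global diffeomorphism onto $\S^2$ guarantees that $\cS=\{L_g(\Sigma_1):g\in M\}$ is a transitive family in $M$. Moreover, each $L_g$ is an isometry of $(M,\esiz,\esde)$ that fixes the left invariant Gauss map along orbits, so $\kappa_1,\kappa_2$ and $\eta$ take the same values at corresponding points of $\Sigma_1$ and $L_g(\Sigma_1)$; consequently \eqref{igualW} is preserved by left translations and $\cS\subset\cA$. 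With both hypotheses verified, Theorem \ref{main1} yields $\Sigma_2\in\cS$. The only genuinely delicate point of the plan is the first one, namely, the passage from the symmetric elliptic Weingarten-type relation \eqref{igualW} to an absolutely elliptic PDE field with $C^{1,\alfa}$ regularity at umbilical points; once this is ensured through the symmetry of $\cW$ in its first two arguments, the rest of the argument is a direct appeal to Example \ref{ejeintro} and Theorem \ref{main1}.
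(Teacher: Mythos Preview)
Your proposal is correct and follows essentially the same route as the paper: define $\cA$ by \eqref{igualW}, take $\cS$ to be the left translations of $\Sigma_1$, invoke Example \ref{ejeintro} for transitivity, and apply Theorem \ref{main1}. The only difference is cosmetic: where the paper simply asserts (with a reference to Alexandrov \cite{A1}) that symmetry of $\cW$ together with \eqref{weinelipal} make \eqref{igualW} a $C^{1,\alfa}$ absolutely elliptic PDE over each tangent plane, you sketch this reduction explicitly via the symmetric invariants of the shape operator.
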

\begin{proof}
Since the Gauss map of $\Sigma_1$ is a diffeomorphism, the family $\cS$ of all left translations of $\Sigma_1$ defines a transitive family $\cS$ in $M$ (see Example \ref{ejeintro}). Every $\Sigma\in \cS$ trivially satisfies \eqref{igualW}. In addition, because of the condition \eqref{weinelipal} and the symmetry of $\cW$, the equation \eqref{igualW} can be written locally over each tangent plane as a $C^{1,\alfa}$ elliptic PDE (this is explained, for instance, in Alexandrov \cite{A1}). Thus, the class of surfaces in $M$ that satisfy \eqref{igualW}  is modeled by an elliptic PDE as in Definition \ref{defitransi}, and contains the family $\cS$. By Theorem \ref{main1} any immersed sphere $\Sigma_2$ in this class is an element of the transitive family $\cS$, i.e. a left translation of $\Sigma_1$. 
\end{proof}

Further applications of Theorem \ref{main1} will be discussed elsewhere (see also \cite{GM3}).

\section{The Alexandrov conjecture}\label{alexsec}

A fundamental problem from classical differential geometry is the study of surfaces in $\R^3$ that satisfy some prescribed relation $\Phi(\kappa_1,\kappa_2,\eta)=0$ between their principal curvatures and Gauss map. If $\Phi$ is $C^1$ and the ellipticity condition $\Phi_{\kappa_1}\Phi_{\kappa_2}>0$ holds, the problem of classifying immersed spheres satisfying this equation has been deeply studied. In this sense, we may distinguish two lines of inquiry.

The first one goes back to the well-known Christoffel and Minkowski problems, and studies existence and uniqueness of ovaloids in $\R^3$ satisfying equation $\Phi(\kappa_1,\kappa_2,\eta)=0$. By combined efforts of Lewy \cite{Le}, Alexandrov \cite{A0,A1}, Pogorelov \cite{P1,P2} or Hartman and Wintner \cite{HW} among others, the uniqueness of immersed spheres that are solutions to $\Phi(\kappa_1,\kappa_2,\eta)=0$ is well understood if we restrict to closed spheres of positive curvature and elliptic functionals (see Remark \ref{remale} below). As regards some more recent important advances on this problem in arbitrary dimension, we may quote \cite{GLM,GMa,GMZ,GG} (see also \cite{EGM} for the case of hyperbolic space $\H^{n+1}$).

The second line of inquiry refers to the case $\Phi=\Phi(\kappa_1,\kappa_2)\in C^1$ and $\Phi_{\kappa_1}\Phi_{\kappa_2}>0$, which corresponds to the class of \emph{elliptic Weingarten surfaces} in $\R^3$, that obviously includes the case of CMC surfaces. The problem of whether round spheres in $\R^3$ are the only elliptic Weingarten spheres immersed in $\R^3$ has been studied by many authors, and partial solutions to this problem have been obtained, among others, by Hopf \cite{Ho}, Chern \cite{Ch}, Hartman and Wintner \cite{HW1}, Pogorelov \cite{P1,P2} or Alexandrov \cite{A1,A3}.

In the 1956 paper that opened his famous series \emph{Uniqueness theorems for surfaces in the large}, A.D. Alexandrov \cite{A1} conjectured a uniqueness result that would strongly generalize all these previously known theorems on uniqueness of spheres in $\R^3$. If true, the conjecture would imply that if a strictly convex sphere $S\subset \R^3$ satisfies an elliptic curvature equation $\Phi(\kappa_1,\kappa_2,\eta)=0$ (with $\Phi\in C^1$ not necessarily symmetric in $\kappa_1,\kappa_2)$, then any other \emph{immersed} sphere in $\R^3$ satisfying this equation is a translation of $S$. We prove this Alexandrov conjecture next.

\begin{theorem}[Alexandrov's conjecture]\label{alexcon}
Let $S\subset \R^3$ be a closed surface of positive curvature, and $\Phi=\Phi(k_1,k_2,x):\R^2\times \S^2\flecha \R$ be $C^{1}$ with respect to $k_1,k_2$, and such that \begin{equation}\label{elipal}\frac{\parc \Phi}{\parc k_1} \frac{\parc \Phi}{\parc k_2} >0\end{equation} on the subset $\Omega\subset \R^2\times \S^2$ given by 
\begin{equation}\label{omegal}\Omega =\{(\landa \kappa^0_1(p),\landa \kappa^0_2(p),\eta^0(p))\in \R^2\times \S^2 : p\in S, \landa \in \R\},
\end{equation}
where $\eta^0:S\flecha \S^2$ is the inner unit normal of $S$, and $\kappa^0_1\geq \kappa^0_2:S\flecha \R$ are its principal curvatures.

Let $f:\S^2\flecha \R$ be the function defined by
\begin{equation}\label{eqfilcero}
\Phi(\kappa^0_1(p),\kappa^0_2(p),\eta^0(p)) = f(\eta^0(p)) \hspace{1cm} \forall p\in S.
 \end{equation}
 
Then any other compact oriented surface of genus zero $\Sigma$ immersed in $\R^3$ whose Gauss map $\eta$ and principal curvatures $\kappa_1\geq \kappa_2$ satisfy 
\begin{equation}\label{eqfil}
\Phi(\kappa_1(p),\kappa_2(p),\eta(p)) = f(\eta(p)) \hspace{1cm} \forall p\in \Sigma
 \end{equation}
is a translation of $S$.
\end{theorem}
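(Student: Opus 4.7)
The plan is to reduce Theorem \ref{alexcon} to a direct application of Theorem \ref{main1}, using $\cS := \{S + v : v \in \R^3\}$ as the candidate transitive family. Since $S$ is a closed surface of positive Gauss curvature, it is a strictly convex ovaloid, so its Gauss map $\eta^0 : S \to \S^2$ is a global diffeomorphism. Viewing $\R^3$ as an abelian metric Lie group (whose left invariant Gauss map coincides with the Euclidean Gauss map), Example \ref{ejeintro} implies that $\cS$ is a transitive family in $\R^3$, and formula \eqref{eqfilcero} shows that every $S + v \in \cS$ satisfies the prescribed curvature equation \eqref{eqfil}.

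Let $\cA$ be the class of all immersed oriented surfaces in $\R^3$ whose Gauss map $\eta$ and ordered principal curvatures $\kappa_1 \geq \kappa_2$ satisfy \eqref{eqfil}; then $\cS \subset \cA$ and $\Sigma \in \cA$. To deduce $\Sigma \in \cS$ from Theorem \ref{main1}, the key task is to verify that $\cA$ is modeled by an elliptic PDE in the sense of Definition \ref{defieli}. For each $(p, \Pi) \in \Upsilon$ I would take adapted Euclidean coordinates, let $S^* = S_{(p, \Pi)} \in \cS$ be the unique translate of $S$ tangent to $\Pi$ at $p$, write $S^*$ locally as a graph $z = h^*(x,y)$, and express \eqref{eqfil} on an arbitrary local graph $z = u(x, y)$ as a PDE $\Phi_{(p, \Pi)}(x, y, u, \nabla u, D^2 u) = 0$. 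Absolute ellipticity of $\Phi_{(p, \Pi)}$ on a neighborhood of the tangent $2$-jet of $h^*$ follows from the hypothesis \eqref{elipal}: the principal curvatures of $S^*$ at $p$ coincide with those of $S$ at the $\eta^0$-preimage of the normal to $\Pi$, hence lie in $\Omega$ with $\lambda = 1$, and the continuity of $\Phi_{k_1}, \Phi_{k_2}$ spreads the sign condition to a neighborhood.

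The main obstacle I anticipate is the $C^{1,\alpha}$ regularity of $\Phi_{(p, \Pi)}$ at those tangent configurations whose Gauss direction corresponds via $\eta^0$ to an umbilical point of $S$. At such configurations the tangent Hessian of $h^*$ is a scalar multiple of the identity, and, because $\Phi$ is not assumed symmetric in its first two arguments, the naive PDE inherits from the expression $\kappa_1 - \kappa_2 = \sqrt{(u_{xx} - u_{yy})^2 + 4 u_{xy}^2}$ a Lipschitz but not $C^1$ dependence on the Hessian. In the symmetric case of Corollary \ref{corapp} this difficulty does not arise because $\kappa_1 - \kappa_2$ appears only through the smooth invariant $(\kappa_1 - \kappa_2)^2 = 4(H^2 - K)$; the whole point of Theorem \ref{alexcon} is precisely to cover the non-symmetric case. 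My plan to overcome this is to replace $\Phi_{(p, \Pi)}$ near each umbilical tangent configuration by a $C^{1,\alpha}$ absolutely elliptic PDE whose local solution set coincides with that of \eqref{eqfil} on a neighborhood of the tangent $2$-jet of $h^*$, so that $\cS$ and $\Sigma$ remain local solutions. This should be feasible because the ellipticity \eqref{elipal} guarantees $\Phi_{k_1} + \Phi_{k_2} \neq 0$ at each umbilical value of $\Omega$; the Implicit Function Theorem applied in the symmetric coordinates $H = (k_1 + k_2)/2$ and $\delta = (k_1 - k_2)/2 \geq 0$ then allows a local reformulation of \eqref{eqfil} in terms of the smooth Hessian invariants $(H, K, \eta)$, with the non-smooth radical $\delta = \sqrt{H^2 - K}$ absorbed through a careful $C^{1,\alpha}$ extension.

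Once the elliptic PDE field has been built, Theorem \ref{main1} applies directly to $\cA$ and the transitive family $\cS \subset \cA$, forcing the immersed sphere $\Sigma \in \cA$ to be an element of $\cS$. This means $\Sigma = S + v$ for some $v \in \R^3$, which is the claimed conclusion.
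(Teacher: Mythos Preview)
Your reduction to Theorem \ref{main1} has a genuine gap: the ``careful $C^{1,\alpha}$ extension'' you propose at umbilical tangent configurations does not exist in general, and the paper explicitly says so. To see the obstruction concretely, take your own test case $\kappa_1+2\kappa_2=3$ near the umbilic $\kappa_1=\kappa_2=1$. Writing a local graph with Hessian entries $(r,s,t)$ and setting $a=r-1$, $b=t-1$, the constraint becomes $3(a+b)=2\sqrt{(a-b)^2/4+s^2}$, i.e.\ a \emph{half-cone} through the origin in $(a,b,s)$-space. No $C^1$ function with nonvanishing gradient can cut out a half-cone near its vertex, so there is no $C^{1,\alpha}$ absolutely elliptic $\Phi_{(p,\Pi)}$ whose local solution set coincides with \eqref{eqfil}. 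If instead you pass to the full cone (the symmetrized relation $9(a+b)^2=(a-b)^2+4s^2$, equivalently the smooth Weingarten equation $K=-8H^2+18H-9$), a direct check gives $\partial_{\kappa_1}W=\kappa_2-(-8H+9)=0$ at the umbilic, so ellipticity is lost precisely there. This is not an artifact of the example: for non-symmetric $\Phi$ the two branches $\Phi(\kappa_1,\kappa_2,\eta)=f(\eta)$ and $\Phi(\kappa_2,\kappa_1,\eta)=f(\eta)$ meet transversally along the umbilic line, and any smooth equation containing one branch near the vertex must degenerate.

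There is a second difficulty you underweight. The ellipticity \eqref{elipal} is assumed only on the set $\Omega$ of \eqref{omegal}, and $\Phi$ is not even required to be continuous in $x\in\S^2$. Thus, away from points where $\Sigma$ has second-order contact with a candidate, nothing guarantees that the tensor $\sigma$ of Theorem \ref{main2} is Lorentzian, so its null directions need not give global line fields on $\Sigma$. The paper circumvents both problems simultaneously by \emph{not} applying Theorem \ref{main1}. Instead it: (i) passes to the Legendre transform, so that $\sigma$ becomes $(1+x^2+y^2)^{-1/2}\nabla^2\varrho$ for $\varrho=h-h^0$; (ii) shows $\varrho$ satisfies a linear uniformly elliptic equation $\sum a_{ij}\varrho_{x_ix_j}=0$ with merely \emph{bounded measurable} coefficients (handling the umbilical set as a separate case), and invokes the Bers--Nirenberg representation in place of Bers' theorem; (iii) replaces the null lines of $\sigma$ by the eigendirections of the endomorphism $T$ defined by $\cM(T\cdot,\cdot)=II$, which are globally defined on $\Sigma$ minus the zero set of $\sigma$ because $\cM>0$; and (iv) proves these eigendirections have \emph{non-positive} (rather than negative) index at each singularity, which still contradicts Poincar\'e--Hopf on $\S^2$. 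Each of these steps addresses an obstruction that your proposed direct reduction cannot remove.
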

\begin{proof}
Fix $\Phi, f, S$ as in the statement. By convexity, the family $\cS=\{S+a : a\in \R^3\}$ constitutes a smooth transitive family of surfaces in $\R^3$. However, we cannot apply Theorem \ref{main1} as we did in Corollary \ref{corapp} because the class $\cB$ of surfaces $\Sigma$ in $\R^3$ that satisfy \eqref{eqfil} is not modeled by an elliptic PDE, for two reasons:
\begin{enumerate}
\item
The ellipticity condition \eqref{elipal} for \eqref{eqfil} only holds on the set $\Omega$ given by \eqref{omegal}.
\item
Equation \eqref{eqfil} is not of class $C^{1,\alfa}$. More specifically, even if $\Phi$ and $f$ were of class $C^{\8}$, equation \eqref{eqfil} would still be defined in terms of $\kappa_1,\kappa_2$, which are not necessarily $C^{1}$ at the umbilics of $\Sigma$. Hence, when we view $\Sigma$ as a graph $z=z(x,y)$ in local coordinates around an umbilic, the PDE in which \eqref{eqfil} translates would still not be $C^{1,\alfa}$ in general. This is a key problem.
\end{enumerate}
We should also point out that the function $\Phi(k_1,k_2,x)$ is not assumed to depend even continuously on $x$. For example, $\Phi(k_1,k_2,x)$ could be given by $k_1+k_2$ in some directions $x\in \S^2$ and by $k_1k_2$ in some others.

We circumvent these difficulties by modifying some arguments in the proof of Theorem \ref{main1}, and through the use of more sophisticated theorems for elliptic PDEs in dimension two. First of all, observe that since $S$ has positive curvature and is oriented by its inner unit normal, $II_S$ is positive definite. This implies that if $\Sigma$ is an immersed oriented surface in $\R^3$, the smooth symmetric bilinear form $\cM$ defined in the proof of Theorem \ref{main1} is a Riemannian metric on $\Sigma$.

Let now $\sigma:T\Sigma\times T\Sigma\flecha C^{\8}(\Sigma)$ be given by \eqref{sigmadef}, and let $T:\X(\Sigma)\flecha \X(\Sigma)$ denote the smooth endomorphism, self-adjoint with respect to $\cM$, given by 
 \begin{equation}\label{defT}
 \cM(T(X),Y)= II(X,Y) \hspace{1cm} \forall X,Y\in \X(\Sigma).
 \end{equation}
Note that a homothety $\frac{1}{\landa} S$ of $S$ also defines a transitive family of surfaces $\cS_{\landa}$ in $\R^3$ which changes $\cM$ to $\landa \cM$ on $\Sigma$. Then, it easily follows from \eqref{sigmadef}, \eqref{defT} that $T$ is proportional to the identity at a point $q\in \Sigma$ if and only if $\Sigma$ has at $q$ a contact of order at least two with some element of $\cS_{\landa}$ for some $\landa>0$. Let $\cU\subset \Sigma$ be the set of points where this happens. Then $T$ defines on $\Sigma\setminus \cU$ two smooth line fields, given by the eigendirections of $T$; they are mutually orthogonal with respect to $\cM$. Note that $\cU$ can be regarded as the set of singularities in $\Sigma$ of these smooth line fields.

We prove next that if $\Sigma$ satisfies \eqref{eqfil}, then $\cU=\{p\in \Sigma : \sigma(p)=0\}$. Indeed, let $p\in \cU$ and consider the translation of $S$ that has an oriented tangent contact with $\Sigma$ at $p$; we keep denoting this translated surface by $S$. Then $(\kappa_1(p),\kappa_2(p),\eta(p))= (\landa \kappa_1^0(p),\landa \kappa_2^0(p),\eta^0(p))$ for some $\landa>0$; in particular \eqref{elipal} holds at $p$. Noting that $\eta^0(p)=\eta(p)$, the monotonicity properties of $\Phi$ with respect to $k_1,k_2$ implied by \eqref{elipal} show that $$\Phi (\kappa_1(p),\kappa_2(p),\eta(p))=f(\eta(p))=\Phi(\kappa_1^0 (p),\kappa_2^0(p),\eta^0(p))$$ is impossible unless $\landa=1$, i.e. unless $S$ makes a contact of order at least two with $\Sigma$ at $p$. This is equivalent to $\sigma(p)=0$.

Take now $p\in \cU$, and assume that $\sigma\not\equiv 0$ around $p$. Since $\Sigma,S$ have a contact of order at least two at $p$ and $S$ has positive curvature, so does $\Sigma$. 
Consider coordinates $(x,y,z)$ in $\R^3$ so that $p=(0,0,0)$, and that $T_p\Sigma$ is the $z=0$ plane. Denote $\Sigma$ around the origin as an immersion $\psi=(\psi_1,\psi_2,\psi_3)$, with unit normal $\eta$. The classical \emph{Legendre transform} of $\psi$ is $$L_{\psi}= \left(-\frac{\eta_1}{\eta_3}, -\frac{\eta_2}{\eta_3}, -\psi_1 \frac{\eta_1}{\eta_3} - \psi_2 \frac{\eta_2}{\eta_3} - \psi_3\right).$$ $L_{\psi}$ is a graph around the origin, that we may write as $(x,y,h(x,y))$. If we now view the immersion $\psi$ as a map $\psi=\psi(x,y)$ in these coordinates, a computation shows that:
 \begin{enumerate}
 \item
$\psi(x,y)= (h_x,h_y,-h+x h_x + y h_y)$ and $\eta(x,y)=\frac{(-x,-y,1)}{\sqrt{1+x^2+y^2}}.$
 \item
The second fundamental form of $\psi$ is $II=\frac{1}{\sqrt{1+x^2+y^2}} \nabla^2 h$.
 \item
The principal curvatures of $\psi$ are given by $\kappa_i=\Psi^i(x,y,h_{xx},h_{xy},h_{yy})$, where
$\Psi^i (x,y,r,s,t)$, $i=1,2$, are continuous, and $C^1$ except on the set $\mathcal{R}\subset \R^5$
 \begin{equation}\label{lodedentro}(r(1+x^2) + t(1+y^2) + 2 s xy)^2 - 4 (rt-s^2)(1+x^2+y^2)=0.\end{equation} For every $(x,y)$ fixed, $\cR$ is a straight line in $\R^3$ (in $(r,s,t)$-coordinates) passing through the origin.
 \end{enumerate}
 
The same process we have just done for $\Sigma$ can be equally done for $S$; we denote the quantities associated to $S$ by $\psi^0$, $\eta^0$, $h^0(x,y)$, etc. Note, in particular, that $\eta(x,y)=\eta^0(x,y)=\frac{(-x,-y,1)}{\sqrt{1+x^2+y^2}}$. 

In this way, $\cM=\frac{1}{\sqrt{1+x^2+y^2}} \nabla^2 h^0$, and from \eqref{sigmadef}, 
\begin{equation}\label{sigmaro}
\sigma=\frac{1}{\sqrt{1+x^2+y^2}} \left(\begin{array}{ll} \varrho_{xx} & \varrho_{xy} \\ \varrho_{xy} & \varrho_{yy} \end{array}\right), \hspace{1cm} \varrho := h- h^0.
 \end{equation}
Note that the Hessian matrix of $\varrho$ is not identically zero, since $\sigma\not\equiv 0$ around $p$.

In addition, observe that we may rewrite equation \eqref{eqfil} as 
\begin{equation}\label{difef} 
F(x,y,h_{xx},h_{xy},h_{yy})= F(x,y,h_{xx}^0,h_{xy}^0,h_{yy}^0),
\end{equation}
where $F=F(x,y,r,s,t)$ is, for each $(x,y)$ fixed, of class $C^1$ with respect to $r,s,t$ except at points of $\mathcal{R}$.

Equation \eqref{eqfil} is uniformly elliptic on small compact regions where $0<c\leq \Phi_{k_i}\leq C$. For our parameters $(x,y)$, by \eqref{elipal}, this implies that there exists a closed ball $\cW \subset \R^5$ around $\alfa_0:=(0,0,h_{xx}(0,0),h_{xy}(0,0),h_{yy}(0,0))$ and positive constants $\hat{\Lambda}_i$ such that
\begin{equation}\label{unifeli2}
\hat{\Lambda}_1 (\xi_1^2+\xi_2^2) \leq \frac{\parc F}{\parc r}\xi_1^2 +  \frac{\parc F}{\parc s} \xi_1 \xi_2 + \frac{\parc F}{\parc t} \xi_2^2 \leq \hat{\Lambda}_2 (\xi_1^2+\xi_2^2),
\end{equation}
for every $(\xi_1,\xi_2)\in \R^2$ and at every point in $\mathcal{W}-\cR$.

Consider now a small disk $D\subset \R^2$ so that $\{(x,y,h_{xx},h_{xy},h_{yy}): (x,y)\in \overline{D}\} \subset \cW,$ and so that the same property holds for $(x,y,h_{xx}^0,h_{xy}^0,h_{yy}^0)$. In what follows we denote $r=h_{xx}(x,y)$, $r^0=h_{xx}^0(x,y)$, etc. We distinguish two possible cases for $(x,y)\in D$:

\vspace{0.2cm}

{\bf Case 1:} \emph{$(x,y,r,s,t)$ and $(x,y,r^0,s^0,t^0)$ do not both satisfy \eqref{lodedentro}}. In that case, the segment joining both points in $\cW$ meets the line $\cR$ defined by \eqref{lodedentro} in at most one point. Thus, by \eqref{difef}, $\varrho:=h-h^0$ satisfies at $(x,y)$ the linear PDE
 \begin{equation}\label{linro}
a_{11} \varrho_{xx} + 2 a_{12} \varrho_{xy} + a_{22}\varrho_{yy}=0,
 \end{equation}
where $a_{11}(x,y) = \int_0^1 \frac{\parc F}{\parc r} (x,y,r_{\tau},s_{\tau},t_{\tau}) d\tau,$ being $r_{\tau}:= (1-\tau) r + \tau r^0$, etc. 

\vspace{0.2cm}

{\bf Case 2:} \emph{$(x,y,r,s,t)$ and $(x,y,r^0,s^0,t^0)$ both satisfy \eqref{lodedentro}}, i.e. both lie in $\mathcal{R}$. Therefore, $(\kappa_1(x,y),\kappa_2(x,y) )=\landa (\kappa_1^0(x,y),\kappa_2^0(x,y))$ for some $\landa >0$. As $\Phi(\kappa_1,\kappa_2,\eta)=\Phi(\kappa_1^0,\kappa_2^0,\eta)$, the condition $\Phi_{k_1}\Phi_{k_2}>0$ implies $\landa=1$. This means that $\nabla^2 \varrho (x,y)= {\bf 0}$, so taking $a_{11}(x,y)=a_{22}(x,y)=1, a_{12}(x,y)=0$ we see that $\varrho:=h-h^0$ satisfies \eqref{linro} at  $(x,y)$. 

\vspace{0.2cm}

To sum up: $\varrho:=h-h^0$ satisfies in $D$ the linear PDE \eqref{linro}, where the coefficients $a_{ij}(x,y)$ might not be continuous but satisfy (by \eqref{unifeli2}) the uniform ellipticity condition 
 \begin{equation}\label{unifeli3}
\Lambda_1 (\xi_1^2+\xi_2^2) \leq \sum a_{ij}(x,y) \xi_i \xi_j \leq \Lambda_2 (\xi_1^2+\xi_2^2),
\end{equation}
for every $(x,y)\in D$ and every $(\xi_1,\xi_2)\in \R^2$, where $\Lambda_1:= {\rm min} \{1,\hat{\Lambda}_1\}$, $\Lambda_2:= {\rm max} \{1, \hat{\Lambda}_2\}$. 

From \eqref{unifeli3} we can clearly assume in \eqref{linro} that $a_{11} + a_{22} =2$, that $a_{11} >0$ and that $a_{11} a_{22}-a_{12}^2\geq c^2>0$ for some constant $c\in (0,1]$. 
 
 Let $\mu_1$ be given by $a_{11} = 1-\mu_1$ (thus, $a_{22} = 1+ \mu_1$), let $\mu_2 := -a_{12}$, and rewrite \eqref{linro} as 
 \begin{equation}\label{linro2}
 \varrho_{xx} + \varrho_{yy}= \mu_1 (\varrho_{xx} -\varrho_{yy}) + 2 \mu_2 \varrho_{xy}.
 \end{equation}
By $a_{11} a_{22}-a_{12}^2\geq c^2>0$ we have $\mu_1^2+\mu_2^2 \leq 1-c^2$, so by Cauchy-Schwarz,
 \begin{equation*}
(\varrho_{xx}+\varrho_{yy})^2 \leq (1-c^2)((\varrho_{xx}-\varrho_{yy})^2 + 4 \varrho_{xy}^2),
 \end{equation*}
which can be rewritten as 
 \begin{equation}\label{lineq2}
\varrho_{xx} \varrho_{yy}-\varrho_{xy}^2 \leq -\ep^2 (\varrho_{xx}+\varrho_{yy})^2, \hspace{1cm} \left(\ep^2= \frac{c^2}{4(1-c^2)}\right).
 \end{equation}

On the other hand, \eqref{linro} satisfies the conditions of the Bers-Nirenberg representation in \cite{BN}. As $S,\Sigma$ are smooth, $\varrho\in C^{\8}$, $\varrho \not\equiv 0$, so the Bers-Nirenberg strong unique continuation principle implies $\varrho(x,y)=w(x,y)+ o(\sqrt{x^2+y^2})^k$, where $w(x,y)$ is a homogeneous polynomial of degree $k\geq 3$, since $\Sigma,S$ have at $p$ a contact of order at least two. Therefore, from \eqref{lineq2}, 
 \begin{equation}\label{linp}
\frac{w_{xx} w_{yy}-w_{xy}^2}{(w_{xx}+w_{yy})^2} \leq -\ep^2 <0, \end{equation} unless $w_{xx}+w_{yy}=0$ (if $w_{xx}+w_{yy}=0$, we can argue directly that $w_{xx} w_{yy}-w_{xy}^2<0$ on $\R^2\setminus\{(0,0\}$).
By the fundamental theorem of algebra, 
 \begin{equation}\label{fta}
w(x,y)=w_0(x,y)\Pi_{j=1}^l (\alfa_j x + \beta_j y)^{m_j}
 \end{equation}
where $w_0(x,y)$ is a homogeneous polynomial without zeros, $m_j\in \N$, $m_j>0$, and any two vectors $(\alfa_i,\beta_i),(\alfa_j,\beta_j)$ are not collinear if $i\neq j$. A computation shows that if $m_j>1$ for some $j$, the left-hand side of \eqref{linp} is zero along the line $\alfa_jx+\beta_jy=0$. This contradicts \eqref{linp}. So, $m_j=1$ for all $j\in \{1,l\}$ in \eqref{fta}. In particular, $w_{xx},w_{xy},w_{yy}$ only vanish simultaneously at the origin, by homogeneity. By \eqref{linp}, this implies that $w_{xx} w_{yy} - w_{xy}^2<0$ on $\R^2\setminus\{(0,0)\}$. 

As a consequence, $\varrho_{xx} \varrho_{yy} - \varrho_{xy}^2<0$ on the punctured disk $D^*$, choosing $D$ smaller if necessary. By \eqref{sigmaro}, we deduce that $\sigma$ is a Lorentzian metric on $D^*$; in particular, the null fields of $\sigma$ are well defined on $D^*$ and have an isolated singularity at the origin. These null fields are given by 
\begin{equation}\label{nufi} \varrho_{xx} dx^2 + 2 \varrho_{xy}dx dy + \varrho_{yy} dy^2 =0,\end{equation} and have the same index around $(0,0)$. Note that since $\varrho_{xx} \varrho_{yy} - \varrho_{xy}^2<0$ on $D^*$, the origin is an isolated critical point of $\varrho_x$. It is then well known in index theory that the index of the null fields of \eqref{nufi} is non-positive if and only if the index of $\nabla \varrho_x:= (\varrho_{xx}, \varrho_{xy})$ is non-positive (see e.g. \cite[p.167]{Ho}).  As $\varrho$ is a solution to \eqref{linro}, it follows from the Bers-Nirenberg representation that $\varrho_x,\varrho_y$ do not have local extrema in $D$, see e.g. \cite[p.262]{BJS}. This implies by an easy geometric argument that the index of the gradient of $\varrho_x$ around the origin is non-positive. We conclude then that the index of each null field of $\sigma$ around $p$ is also non-positive. 

Note that, by a connectedness argument as in the proof of Theorem \ref{main1}, we have also shown that if $\sigma\not\equiv 0$ on $\Sigma$, the zeros of $\sigma$ are isolated. However, $\sigma$ is not a Lorentzian metric with isolated singularities on $\Sigma$, because the ellipticity condition \eqref{elipal} is not global on $\Sigma$. So, we do not work with the null fields of $\sigma$, but with the eigendirections of the operator $T$ instead. By our previous arguments, these eigendirections are defined on the complement of the zeros of $\sigma$, which are isolated. Note that by \eqref{defT} $$\cM((T-Id)(X),Y)=\sigma(X,Y).$$ A simple calculation using this equation shows that, at points where $\sigma$ is a Lorentzian metric, the eigendirections of $T-Id$ bisect (with respect to the Riemannian metric $\cM$)  the null directions of $\sigma$. This indicates that the index of the eigendirections of $T$ around any zero $p$ of $\sigma$ coincides with the index of the null fields of $\sigma$ at $p$, which is non-positive.

All of this together shows that in case $\sigma\not \equiv 0$ on $\Sigma$, the eigenspaces of $T$ provide two line fields on $\Sigma$ with isolated singularities of non-positive index. This is impossible by the Poincaré-Hopf theorem, since $\Sigma$ is diffeomorphic to $\S^2$. Therefore $\sigma$ vanishes identically, and this implies as in Theorem \ref{main1} that $\Sigma$ is a translation of $S$. This concludes the proof.
\end{proof}
\begin{remark}\label{remale}
Alexandrov conjectured the validity of Theorem \ref{alexcon} in page 350 of \cite{A1}, after announcing that Theorem \ref{alexcon} holds under the additional, ``\emph{certainly superfluous}" hypothesis that both $\Sigma,S$ are real analytic (see \cite[Theorem 8]{A1}). Alexandrov also announced (Theorem 9 in \cite{A1}) that Theorem \ref{alexcon} holds provided $\Sigma$ is also strictly convex, or at least is such that the spherical image of its region of negative curvature does not cover the sphere, and claimed that the proof was ``\emph{more complicated here than in the preceding cases}'' of theorems also announced there. However, he did not supply a proof for these two theorems in the series of papers that \cite{A1} initiated; this contrasts with the rest of results announced in \cite{A1}.

A proof of Theorem \ref{alexcon} in the particular case that $\Sigma$ is also strictly convex and the ellipticity condition \eqref{elipal} holds globally on $\R^2\times \S^2$ (but under weaker regularity conditions) was recently found by Guan, Wang and Zhang \cite{GWZ}, generalizing previous theorems by Alexandrov, Pogorelov and Hartman-Wintner (\cite{A0,A1,P1,HW}; see also \cite{GWZ} for a historical account of the problem). In \cite{GM} the authors proved Theorem \ref{alexcon} for the particular case of prescribed mean curvature, i.e. for the choice $\Phi(k_1,k_2,x)=k_1+k_2$.

Theorem \ref{alexcon} is not true if $S$ does not have positive curvature; see \cite[Example 5.1]{GM}.
\end{remark}

Recall that a \emph{Weingarten surface} in $\R^3$ is an immersed oriented surface whose principal curvatures $\kappa_1\geq \kappa_2$ satisfy a $C^1$ relation $W(\kappa_1,\kappa_2)=0$. If $W_{\kappa_1}W_{\kappa_2}>0$, we say that $\Sigma$ is an elliptic Weingarten surface, since this inequality is precisely the ellipticity condition for the PDE associated to $W(\kappa_1,\kappa_2)=0$.

An immediate consequence of Theorem \ref{alexcon} is the result that \emph{round spheres are the only elliptic Weingarten spheres immersed in $\R^3$}. This settles another classical problem. In the embedded case, the result was proved by Alexandrov via the reflection principle \cite{A1,A3}. For real analytic surfaces, the result follows from work by Voss \cite{Vo}.
In the non-analytic, non-embedded case, the result was previously known only for the special case that $W$ is symmetric, i.e. when $W(\kappa_1,\kappa_2)=W(\kappa_2,\kappa_1)$, after works of Hopf, Hartman-Wintner and Chern (see \cite{Ho,HW1,Ch}). This symmetry condition is equivalent to the possibility of rewriting $W(\kappa_1,\kappa_2)=0$ locally as $F(H,K)=0$ for some $C^1$ function $F$, where $H,K$ are the mean and Gaussian curvatures of the surface. Without this symmetry condition at umbilical points, the elliptic PDE associated to the equation $W(\kappa_1,\kappa_2)=0$ is not of class $C^1$ at these points, and the arguments of Hopf, Hartman-Wintner and Chern are not applicable. For example, the statement that \emph{spheres of radius one are the unique immersed spheres in $\R^3$ whose principal curvatures satisfy the relation $\kappa_1+2\kappa_2=3$} is not covered by all these classical theorems, but is a particular case of the Corollary below:
\begin{corollary}\label{weing}
Let $\Sigma$ be an immersed sphere in $\R^3$. Assume that, in a neighborhood of each umbilical point of $\Sigma$, the principal curvatures $\kappa_1\geq \kappa_2$ satisfy a $C^1$ relation $W(\kappa_1,\kappa_2)=0$ with $W_{\kappa_1}W_{\kappa_2}>0$. Then $\Sigma$ is a round sphere.

In particular, round spheres are the only elliptic Weingarten spheres immersed in $\R^3$.
\end{corollary}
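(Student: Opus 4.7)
The ``in particular'' assertion about elliptic Weingarten spheres is immediate from the main statement, since a global elliptic Weingarten relation holds in particular near every umbilical point. So my plan focuses on the main claim, which I would prove by contradiction: assume $\Sigma$ is a non-round immersed sphere satisfying the hypothesis, and adapt the proof of Theorem \ref{alexcon} to the present setting. First I would observe that $\Sigma$ must contain at least one umbilical point, since otherwise its principal line fields would furnish a continuous nowhere-vanishing line field on $\Sigma\cong \S^2$, contradicting $\chi(\S^2)=2\neq 0$.

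Next, at any umbilic $p$ of $\Sigma$, setting $c:=\kappa_1(p)=\kappa_2(p)$ and assuming first $c\neq 0$, I would choose as candidate comparison surface the round sphere $S_p$ of radius $1/|c|$ tangent to $\Sigma$ at $p$ with matching unit normal. Since every point of $S_p$ is an umbilic with principal curvatures $(c,c)$ and $W(c,c)=0$ by hypothesis, the sphere $S_p$ also satisfies $W(\kappa_1,\kappa_2)=0$ in a neighborhood of $p$. Writing both surfaces locally as graphs $h$, $h^0$ over their common tangent plane, I would run the Bers--Nirenberg-based argument developed in the proof of Theorem \ref{alexcon}---specifically designed to handle the failure of $C^{1,\alpha}$ regularity of the associated PDE at umbilical points when $W$ is non-symmetric---to deduce that $\varrho:=h-h^0$ solves a linear uniformly elliptic PDE with bounded measurable coefficients. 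This yields the dichotomy: either $\varrho\equiv 0$ in a neighborhood of $p$, or $\varrho=w+o(|(x,y)|^k)$ for some nontrivial homogeneous polynomial $w$ of degree $k\geq 3$ (the bound $k\geq 3$ arising because $\Sigma$ and $S_p$ have contact of order at least $2$ at $p$).

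In the first alternative I would apply the connectedness argument from the proof of Theorem \ref{main1} to force a contradiction with non-roundness: any boundary point $q$ of the open set $\{r\in\Sigma:\Sigma=S_p\text{ near }r\}$ would be, by continuity, an umbilic of $\Sigma$ with principal curvature $c$, so the local Weingarten hypothesis would apply at $q$, and Bers--Nirenberg unique continuation would again give $\Sigma=S_p$ near $q$; the set would then be both open and closed in $\Sigma$, hence all of $\Sigma$. In the second alternative, the arguments of Theorem \ref{alexcon} show $p$ is an isolated umbilic of $\Sigma$ and that the principal line fields carry non-positive index at $p$, via the bisection relation between eigendirections of the shape operator and null directions of the Lorentzian tensor $\sigma:=II_\Sigma - c\,g$. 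The flat case $c=0$ would be handled identically, comparing $\Sigma$ with the tangent plane at $p$; the ``globally planar'' alternative is discarded by compactness of $\Sigma$. Repeating this at every umbilic and invoking Poincar\'e--Hopf would yield $2=\chi(\Sigma)=\sum_{p}\mathrm{index}_{p}\leq 0$, the desired contradiction.

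The core technical obstacle is precisely the one Theorem \ref{alexcon} was crafted to overcome: at umbilical points, the PDE corresponding to $W(\kappa_1,\kappa_2)=0$ is not of class $C^{1,\alpha}$, because $\kappa_1,\kappa_2$ themselves are only continuous (not $C^1$) there when $W$ is not symmetric in its arguments. This is exactly why the classical Hopf/Chern/Hartman--Wintner approach via a holomorphic Hopf differential fails, and one must invoke the Bers--Nirenberg theory for linear elliptic equations with merely bounded measurable coefficients. A secondary subtlety I would need to address is that distinct umbilics of $\Sigma$ may carry distinct values of $c_p$, so no single comparison sphere serves globally; this is harmless, since the non-positivity of the index at each umbilic is a purely local computation performed with its own umbilic-specific candidate round sphere, and the Poincar\'e--Hopf count adds up without any need for a uniform comparison surface.
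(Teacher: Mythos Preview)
Your proposal is correct and follows essentially the route the paper intends. The paper presents Corollary~\ref{weing} as an immediate consequence of Theorem~\ref{alexcon} without giving a separate proof, and your argument is precisely the natural way to flesh out that claim: the global line fields are the principal line fields of $\Sigma$ (with singularities exactly at the umbilics), and at each umbilic $p$ you run the local Bers--Nirenberg index computation from the proof of Theorem~\ref{alexcon} with the round sphere (or plane) of curvature $c_p$ as comparison surface, obtaining non-positive index and a Poincar\'e--Hopf contradiction.

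A few remarks on points you handled well and one technical detail to tighten. First, you correctly observe that the general statement (with a possibly different Weingarten relation $W$ near each umbilic, hence possibly different $c_p$'s) does \emph{not} follow from Theorem~\ref{alexcon} as a black box, since that theorem requires a single strictly convex comparison surface and a global equation; your local index computation with umbilic-specific comparison spheres is the right fix, and your note that the principal line fields are globally defined independently of any comparison is exactly what makes the argument assemble. Second, your connectedness/unique-continuation argument in the ``first alternative'' is correct. The one place to be slightly more careful is the planar case $c_p=0$: the Legendre transform used in the proof of Theorem~\ref{alexcon} requires positive curvature of $\Sigma$ at $p$, which fails here. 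You should instead work in ordinary graph coordinates $z=u(x,y)$ over $T_p\Sigma$; since $W$ does not depend on $\eta$, the Legendre trick is unnecessary, and the linearization of $W(\kappa_1[u],\kappa_2[u])=0$ against $u^0\equiv 0$ still yields a linear uniformly elliptic equation for $\varrho=u$ with bounded measurable second-order coefficients (plus harmless first-order terms coming from the $\nabla u$-dependence of $\kappa_i$), to which Bers--Nirenberg applies.
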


\def\refname{References}

\end{document}